\documentclass[12pt]{article}

\usepackage{amsfonts,amsmath,amsthm}
\usepackage{amssymb}
\usepackage{xcolor}
\usepackage{graphicx}
\usepackage{stmaryrd}
\usepackage{dsfont}

\usepackage[paper=a4paper,dvips,top=2cm,left=2cm,right=2cm,
foot=1cm,bottom=4cm]{geometry}

\newtheorem{Thm}{Theorem}[section]
\newtheorem{Def}[Thm]{Definition}

\newtheorem{remark}[Thm]{Remark}

\begin{document}

\title{The SOS Rank of a $5 \times 4$ Biquadratic Form via Orthogonality}

%\Large
\author{%
  % Liqun Qi\footnote{Jiangsu Provincial Scientific Research Center of Applied Mathematics, Nanjing 211189, China. Department of Applied Mathematics, The Hong Kong Polytechnic University, Hung Hom, Kowloon, Hong Kong. 			({\tt maqilq@polyu.edu.hk})}
  %\and
        Yannan Chen\footnote{School of Mathematical Sciences, South China Normal University, Guangzhou 510631, China. (\texttt{email:~ynchen@scnu.edu.cn})}
  %\and
  %	Chunfeng Cui\footnote{School of Mathematical Sciences, Beihang University, Beijing  100191, China. {\tt chunfengcui@buaa.edu.cn})}
  %\and {and \
  % Yi Xu\footnote{School of Mathematics, Southeast University, Nanjing  211189, China. Nanjing Center for Applied Mathematics, Nanjing 211135,  China. Jiangsu Provincial Scientific Research Center of Applied Mathematics, Nanjing 211189, China. ({\tt yi.xu1983@hotmail.com})}		}
}

\date{\today}
\maketitle

\begin{abstract}

Biquadratic forms arise naturally in polynomial optimization, tensor analysis, and quantum information theory. A key problem is determining the minimal number of squares needed in a sum-of-squares (SOS) representation of such a form, known as its SOS rank. For fixed dimensions $(m,n)$, the maximum possible SOS rank over all biquadratic forms in
$m$ and $n$ variables is denoted $\operatorname{BSR}(m,n)$. Recent advances have established lower bounds on $\operatorname{BSR}(m,n)$ via combinatorial constructions involving bipartite graphs and the orthogonality method. In particular, for the case $(m,n)=(5,4)$, it was shown that $\operatorname{BSR}(5,4)\ge 11$ using only nondegenerate 2-edges. In this paper, we extend this framework by incorporating a degenerate $2$-edge, which introduces a cross term where the two $y$-indices coincide. We construct an explicit $5\times 4$ biquadratic form and apply the orthogonality method to prove that its SOS rank is $12$, thereby improving the lower bound to $\operatorname{BSR}(5,4)\ge12$. This result demonstrates that degenerate $2$-edges yield additional algebraic flexibility beyond purely combinatorial bounds and extends the applicability of the orthogonality method to forms with cross terms involving identical $y$-indices.

\medskip

\textbf{Keywords.} Biquadratic forms, sum-of-squares, SOS rank, \(y\)-deficient forms, diagonal forms, positive semidefinite, Zarankiewicz number.

\medskip
\textbf{AMS subject classifications.} 11E25, 12D15, 15A69, 90C23.
\end{abstract}

\newpage

\section{Introduction}

Biquadratic forms, polynomials of the form
\[
P(\mathbf{x},\mathbf{y}) = \sum_{i=1}^m \sum_{j=1}^n \sum_{k=1}^m \sum_{l=1}^n a_{ijkl} \, x_i x_k y_j y_l
\]
with real coefficients, arise naturally in numerous areas of mathematics and applications, including polynomial optimization, tensor analysis, and quantum information theory. A fundamental question concerning such forms is whether they admit a representation as a sum-of-squares (SOS) of bilinear forms, and if so, what is the minimal number of squares required. This minimal number, denoted $\operatorname{sos}(P)$, is known as the \emph{SOS rank} of $P$. Determining the SOS rank of a given biquadratic form, or understanding its possible range over all forms of fixed dimensions, has attracted considerable recent attention \cite{BPSV19,BSSV22,CQX26,QCX26,XCQ26}.

For a fixed pair of positive integers $(m,n)$, define
\[
\operatorname{BSR}(m,n) = \max\{ \operatorname{sos}(P) : P \text{ is a biquadratic form in } m \text{ and } n \text{ variables} \}.
\]
This quantity measures the maximum complexity of SOS representations for biquadratic forms of given dimensions. While some upper and lower bounds for  $\operatorname{BSR}(m,n)$ were recently established, the exact value of $\operatorname{BSR}(m,n)$ remains unknown for most pairs $(m,n)$. In particular, determining the extremal forms that achieve the maximum SOS rank is a challenging problem with deep connections to extremal combinatorics, specifically to the classical Zarankiewicz problem on bipartite graphs with forbidden subgraphs.

A powerful technique for obtaining lower bounds on $\operatorname{sos}(P)$ (and hence on $\operatorname{BSR}(m,n)$) was developed in Theorem~4.1 of \cite{XCQ26}. This \emph{orthogonality method} associates to each monomial $x_i^2 y_j^2$ a vector $\mathbf{v}_{ij}$ in $\mathbb{R}^r$, where $r$ is the number of squares in a hypothetical SOS decomposition. The coefficients of $P$ impose norm constraints on these vectors via pure square terms, while missing cross terms force orthogonality relations among them. By constructing a sufficiently large set of pairwise orthogonal nonzero vectors, one deduces a lower bound $r \ge k$, thereby establishing $\operatorname{sos}(P) \ge k$.

Recent work has established a combinatorial framework for constructing biquadratic forms with large SOS rank using bipartite graphs \cite{CQX26,QCX26}. A significant advance in this direction is the introduction of the \emph{double Zarankiewicz number with nondegenerate $2$-edges}, denoted $z_{2}^{\text{non}}(m,n)$, in \cite{QCX26}. This concept extends the classical Zarankiewicz number by allowing a graph to contain both ordinary $1$-edges, which correspond to pure squares $x_i^2 y_j^2$, and nondegenerate $2$-edges, which correspond to squares of bilinear forms $(x_i y_j + x_k y_l)^2$ with $i \neq k$ and $j \neq l$. For such a graph $G$ that avoids certain generalized $C_4$ cycles, the associated doubly simple biquadratic form satisfies $\operatorname{sos}(P_G) = |E_1| + |E_2|$. Consequently, one obtains the general lower bound $\operatorname{BSR}(m,n) \ge z_{2}^{\text{non}}(m,n)$.

For the case $m = 5$, $n = 4$, it was determined in \cite{QCX26} that the maximum total number of edges achievable with nondegenerate $2$-edges is $z_{2}^{\text{non}}(5,4) = 11$, yielding the bound $\operatorname{BSR}(5,4) \ge 11$. However, this construction uses only nondegenerate $2$-edges, where the two $y$-indices are distinct. A natural question arises: can a higher SOS rank be attained by allowing \emph{degenerate} $2$-edges, i.e., squares of the form $(x_i y_j + x_k y_j)^2 = (x_i + x_k)^2 y_j^2$, where the two $y$-indices coincide? Such terms introduce cross terms $2x_i x_k y_j^2$ that were previously excluded from the nondegenerate framework.

In this paper, we answer this question affirmatively by constructing an explicit $5 \times 4$ biquadratic form that incorporates a degenerate $2$-edge. We apply the orthogonality method to this form and prove that its SOS rank is $12$, thereby establishing the improved lower bound
\[
\operatorname{BSR}(5,4) \ge 12.
\]
This result is significant for two reasons. First, it demonstrates that degenerate $2$-edges provide additional algebraic flexibility that can increase the SOS rank beyond the purely combinatorial limit imposed by nondegenerate constructions. Second, it shows that the orthogonality method from \cite{XCQ26} is robust enough to handle cross terms where $j = l$, a situation not covered in the original nondegenerate analysis. The form we analyze is notable because it is neither \emph{simple} (it contains cross terms) nor \emph{$y$-deficient} (it includes a cross term $2x_4 x_5 y_2^2$ with identical $y$-indices), yet the method applies seamlessly.

The paper is organized as follows. Section~2 presents the $5 \times 4$ biquadratic form, provides an explicit SOS decomposition into $12$ squares, and uses the orthogonality method to prove that no decomposition into fewer squares exists. Concluding remarks are given in Section~3.

\section{A Twelve-Square Form in $5 \times 4$ Variables}

In this section we apply the orthogonality method developed in Theorem 4.1 of \cite{XCQ26} to a $5 \times 4$ biquadratic form. The analysis follows the same systematic approach: we first exhibit an explicit SOS decomposition to obtain an upper bound, then use the vector orthogonality technique to prove a matching lower bound. The result demonstrates the versatility of the method across different dimensions.

\begin{Def}\label{def:5x4-form}
Define the $5 \times 4$ biquadratic form
\[
\begin{aligned}
P(\mathbf{x},\mathbf{y}) =&\; x_1^2 y_1^2 + x_1^2 y_2^2 + x_1^2 y_3^2 \\
&+ x_2^2 y_1^2 + x_2^2 y_4^2 \\
&+ x_3^2 y_2^2 + x_3^2 y_4^2 \\
&+ x_4^2 y_3^2 + x_4^2 y_4^2 \\
&+ x_5^2 y_1^2 \\
&+ (x_2 y_3 + x_5 y_4)^2 + (x_4 + x_5)^2 y_2^2,
\end{aligned}
\]
where $\mathbf{x} = (x_1,\dots,x_5)$ and $\mathbf{y} = (y_1,\dots,y_4)$.
\end{Def}

Expanding the two squared terms gives
\[
\begin{aligned}
(x_2 y_3 + x_5 y_4)^2 &= x_2^2 y_3^2 + x_5^2 y_4^2 + 2 x_2 x_5 y_3 y_4,\\
(x_4 + x_5)^2 y_2^2 &= x_4^2 y_2^2 + x_5^2 y_2^2 + 2 x_4 x_5 y_2^2.
\end{aligned}
\]
Thus the complete expansion of $P$ contains the following pure square terms $x_i^2 y_j^2$:
\[
\begin{array}{c|c}
(i,j) & \text{coefficient} \\ \hline
(1,1),\;(1,2),\;(1,3) & 1 \\
(2,1),\;(2,3),\;(2,4) & 1 \\
(3,2),\;(3,4) & 1 \\
(4,2),\;(4,3),\;(4,4) & 1 \\
(5,1),\;(5,2),\;(5,4) & 1
\end{array}
\]
All other $(i,j)$ with $1 \le j \le 4$ have coefficient $0$. The only nonzero cross terms $x_i x_k y_j y_l$ with $i \neq k$ are
\[
2 x_2 x_5 y_3 y_4 \quad \text{and} \quad 2 x_4 x_5 y_2^2,
\]
each with coefficient $2$. Note that the second cross term has $j = l = 2$, i.e., the same $y$-index in both factors.

\begin{Thm}\label{thm:5x4-rank12}
The biquadratic form $P$ defined in Definition~\ref{def:5x4-form} satisfies $\operatorname{sos}(P) = 12$.
\end{Thm}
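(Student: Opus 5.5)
The upper bound is immediate from Definition~\ref{def:5x4-form}. There $P$ is written as $10$ pure squares $x_i^2y_j^2$ plus the two squares $(x_2y_3+x_5y_4)^2$ and $(x_4+x_5)^2y_2^2$, so $\operatorname{sos}(P)\le 12$. For the lower bound I will follow the orthogonality method of Theorem~4.1 of \cite{XCQ26}. Suppose $P=\sum_{s=1}^r f_s^2$ with bilinear $f_s=\sum_{i,j}a^{(s)}_{ij}x_iy_j$. Put $\mathbf v_{ij}=(a^{(1)}_{ij},\dots,a^{(r)}_{ij})\in\mathbb R^r$ and compare coefficients:
\[
[x_i^2y_j^2]=\|\mathbf v_{ij}\|^2,\quad [x_ix_ky_j^2]=2\langle \mathbf v_{ij},\mathbf v_{kj}\rangle,\quad [x_i^2y_jy_l]=2\langle \mathbf v_{ij},\mathbf v_{il}\rangle,
\]
\[
[x_ix_ky_jy_l]=2\bigl(\langle \mathbf v_{ij},\mathbf v_{kl}\rangle+\langle \mathbf v_{il},\mathbf v_{kj}\rangle\bigr)\quad (i\ne k,\ j\ne l).
\]
From the coefficient table, $\mathbf v_{ij}=0$ off the $14$ support pairs, and $\|\mathbf v_{ij}\|=1$ on them. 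Vectors in the same row are orthogonal. Vectors in the same column are orthogonal, except that $\langle\mathbf v_{42},\mathbf v_{52}\rangle=1$. Between two unit vectors this forces $\mathbf v_{42}=\mathbf v_{52}$. For the cross term $2x_2x_5y_3y_4$ we have $\mathbf v_{53}=0$, so $\langle\mathbf v_{23},\mathbf v_{54}\rangle=1$ and hence $\mathbf v_{23}=\mathbf v_{54}$. This leaves exactly $12$ distinct vectors, namely the support vectors with $\mathbf v_{52}$ and $\mathbf v_{54}$ removed. The goal is to show these $12$ are pairwise orthogonal, which gives $r\ge 12$.

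For pairs $(i,j),(k,l)$ with $i\ne k$, $j\ne l$, the last relation gives $\langle\mathbf v_{ij},\mathbf v_{kl}\rangle=0$ directly whenever $\mathbf v_{il}=0$ or $\mathbf v_{kj}=0$. So the only nontrivial pairs are the corners of rectangles lying entirely in the support. Listing the row supports $\{1,2,3\},\{1,3,4\},\{2,4\},\{2,3,4\},\{1,2,4\}$, the rectangles are the row pairs $\{1,2\},\{1,4\},\{1,5\},\{2,4\},\{2,5\},\{3,4\},\{3,5\},\{4,5\}$, each with its two shared columns. For each such rectangle, only the relation $\langle \mathbf v_{ij},\mathbf v_{kl}\rangle=-\langle \mathbf v_{il},\mathbf v_{kj}\rangle$ is available.

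The main obstacle is to kill these rectangle pairs. I will try to do so using the two identifications $\mathbf v_{42}=\mathbf v_{52}$ and $\mathbf v_{23}=\mathbf v_{54}$ to transfer a product to a pair that is already known to be orthogonal. For example, in the $\{1,2\}\times\{1,3\}$ rectangle,
\[
\langle\mathbf v_{11},\mathbf v_{23}\rangle=\langle\mathbf v_{11},\mathbf v_{54}\rangle=0,
\]
since $\mathbf v_{14}=0$; the rectangle relation then gives $\langle\mathbf v_{13},\mathbf v_{21}\rangle=0$ too. The same trick should handle rectangles that involve row $4$ or row $5$ at column $2$. For example, $\langle \mathbf v_{x2},\cdot\rangle$ can be read through $\mathbf v_{42}$ or $\mathbf v_{52}$, whichever row has a zero in the relevant column. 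Rectangles within rows $\{3,4,5\}$ on columns $\{2,4\}$ collapse because $\mathbf v_{42}=\mathbf v_{52}$ is already orthogonal to $\mathbf v_{44}$ and to $\mathbf v_{54}$ by the same-row relations.

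Where no direct transfer exists, I will chain the rectangle relations. I will combine two rectangles sharing a corner pair into a small linear system whose only solution is zero. Once all $\binom{12}{2}$ inner products vanish, the $12$ vectors are orthonormal in $\mathbb R^r$, so $r\ge 12$ and $\operatorname{sos}(P)=12$.
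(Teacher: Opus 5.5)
Your setup, the identifications $\mathbf v_{42}=\mathbf v_{52}$ and $\mathbf v_{23}=\mathbf v_{54}$, the choice of the $12$ vectors, and the reduction to the eight rectangle relations are all correct, and this is the same approach as the paper. The paper's tables are an exhaustive version of your rectangle reduction. The gap is that the step that actually proves the lower bound, killing the rectangle pairs, is only announced (``should handle'', ``I will chain''). Moreover, the one concrete reason you give for rows $\{3,4,5\}$ on columns $\{2,4\}$ is wrong except for the $\{4,5\}$ rectangle. For $\{3,4\}$ the relation is $\langle\mathbf v_{32},\mathbf v_{44}\rangle+\langle\mathbf v_{34},\mathbf v_{42}\rangle=0$, and for $\{3,5\}$ it is $\langle\mathbf v_{32},\mathbf v_{54}\rangle+\langle\mathbf v_{34},\mathbf v_{52}\rangle=0$. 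Neither involves $\langle\mathbf v_{42},\mathbf v_{44}\rangle$ or $\langle\mathbf v_{52},\mathbf v_{54}\rangle$, so the row-$4$ and row-$5$ orthogonalities give nothing here. Your chaining fallback also does not work by itself: these two rectangles are two homogeneous equations in three unknown inner products, which do not force zero.

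The missing input is a transfer through the other identification. Since $\mathbf v_{54}=\mathbf v_{23}$ and $\mathbf v_{22}=\mathbf v_{33}=\mathbf 0$, the missing term $x_2x_3y_2y_3$ gives $\langle\mathbf v_{32},\mathbf v_{54}\rangle=\langle\mathbf v_{32},\mathbf v_{23}\rangle=0$. The $\{3,5\}$ rectangle then yields $\mathbf v_{34}\perp\mathbf v_{52}=\mathbf v_{42}$, and the $\{3,4\}$ rectangle then yields $\mathbf v_{32}\perp\mathbf v_{44}$; this is exactly the paper's justification for these pairs. The other rectangles do fall to your transfer trick, but you should write them out:
\begin{itemize}
\item For $\{1,4\}$, $\langle\mathbf v_{13},\mathbf v_{42}\rangle=\langle\mathbf v_{13},\mathbf v_{52}\rangle=0$, via $x_1x_5y_2y_3$ and $\mathbf v_{53}=\mathbf 0$.
\item For $\{1,5\}$, $\langle\mathbf v_{11},\mathbf v_{52}\rangle=\langle\mathbf v_{11},\mathbf v_{42}\rangle=0$, via $x_1x_4y_1y_2$ and $\mathbf v_{41}=\mathbf 0$.
\item For $\{2,4\}$, $\langle\mathbf v_{23},\mathbf v_{44}\rangle=\langle\mathbf v_{54},\mathbf v_{44}\rangle=0$, from the missing term $x_4x_5y_4^2$.
\item For $\{2,5\}$, $\langle\mathbf v_{21},\mathbf v_{54}\rangle=\langle\mathbf v_{21},\mathbf v_{23}\rangle=0$, from row $2$.
\end{itemize}
Together with your $\{1,2\}$ computation and the trivially consistent $\{4,5\}$ rectangle, this makes all $66$ pairs orthogonal and completes the argument.
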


\begin{proof}
\textbf{Upper bound.}
The definition itself provides an explicit SOS decomposition into $12$ squares:
\[
\begin{aligned}
P =&\; (x_1 y_1)^2 + (x_1 y_2)^2 + (x_1 y_3)^2 \\
&+ (x_2 y_1)^2 + (x_2 y_4)^2 \\
&+ (x_3 y_2)^2 + (x_3 y_4)^2 \\
&+ (x_4 y_3)^2 + (x_4 y_4)^2 \\
&+ (x_5 y_1)^2 \\
&+ (x_2 y_3 + x_5 y_4)^2 + (x_4 y_2 + x_5 y_2)^2.
\end{aligned}
\]
Thus we say $\operatorname{sos}(P) \le 12$.

\textbf{Lower bound.}
Assume that $P$ admits a decomposition into $r$ squares of bilinear forms with $r \le 12$:
\[
P(\mathbf{x},\mathbf{y}) = \sum_{k=1}^{r} \ell_k(\mathbf{x},\mathbf{y})^2, \qquad
\ell_k(\mathbf{x},\mathbf{y}) = \sum_{i=1}^{5} \sum_{j=1}^{4} a_{ij}^{(k)} x_i y_j.
\]
For each pair $(i,j)$ define the vector $\mathbf{v}_{ij} = \left(a_{ij}^{(1)}, \dots, a_{ij}^{(r)}\right)^T \in \mathbb{R}^r$. From the expansion of $\left(\sum a_{ij}^{(k)} x_i y_j\right)^2$, the coefficient of $x_i^2 y_j^2$ equals $\|\mathbf{v}_{ij}\|^2$, and for $i \neq k$, $j \neq l$ the coefficient of $x_i x_k y_j y_l$ equals $2(\mathbf{v}_{ij} \cdot \mathbf{v}_{kl} + \mathbf{v}_{il} \cdot \mathbf{v}_{kj})$; for $j = l$ the coefficient equals $2\,\mathbf{v}_{ij} \cdot \mathbf{v}_{kj}$, where the small dot ``$\cdot$'' stands for the dot product of vectors.

From the pure square coefficients we obtain:
\begin{itemize}
    \item For all present $(i,j)$ listed above: $\|\mathbf{v}_{ij}\| = 1$.
    \item For absent $(i,j)$: $\mathbf{v}_{ij} = \mathbf{0}$.
\end{itemize}
The zero vectors (absent pure squares) are:
\[
\mathbf{v}_{14}=\mathbf{v}_{22}=\mathbf{v}_{31}=\mathbf{v}_{33}=\mathbf{v}_{41}=\mathbf{v}_{53} = \mathbf{0}.
\]

Now apply the cross-term relations.

\textbf{Cross term $2x_2 x_5 y_3 y_4$ ($j=3$, $l=4$, $j \ne l$):}
\[
2(\mathbf{v}_{23} \cdot \mathbf{v}_{54} + \mathbf{v}_{24} \cdot \mathbf{v}_{53}) = 2.
\]
Since $\mathbf{v}_{53} = \mathbf{0}$, we have $\mathbf{v}_{23} \cdot \mathbf{v}_{54} = 1$. With $\|\mathbf{v}_{23}\| = \|\mathbf{v}_{54}\| = 1$, Cauchy-Schwarz inequality yields
\[
\mathbf{v}_{23} = \mathbf{v}_{54}.
\]

\textbf{Cross term $2x_4 x_5 y_2^2$ ($j=l=2$):}
\[
2\,\mathbf{v}_{42} \cdot \mathbf{v}_{52} = 2 \quad\Longrightarrow\quad \mathbf{v}_{42} \cdot \mathbf{v}_{52} = 1.
\]
With $\|\mathbf{v}_{42}\| = \|\mathbf{v}_{52}\| = 1$, we obtain
\[
\mathbf{v}_{42} = \mathbf{v}_{52}.
\]

We now derive orthogonality relations from missing cross terms.

\noindent\textit{Within the same row.}
For $i=1$: missing $x_1^2 y_1 y_2$, $x_1^2 y_1 y_3$, $x_1^2 y_2 y_3$ give
\[
\mathbf{v}_{11} \cdot \mathbf{v}_{12}=0,\quad \mathbf{v}_{11} \cdot \mathbf{v}_{13}=0,\quad \mathbf{v}_{12} \cdot \mathbf{v}_{13}=0.
\]
For the convenience of readers, we rewrite these vertical relationships in the following form.\\
\textbf{Row 1 orthogonality}
\[
\begin{array}{c|c|c}
\text{Pair} & \text{Missing cross term} & \text{Justification} \\ \hline
\mathbf{v}_{11} \perp \mathbf{v}_{12} & x_1^2 y_1 y_2 & \text{row 1} \\
\mathbf{v}_{11} \perp \mathbf{v}_{13} & x_1^2 y_1 y_3 & \text{row 1} \\
\mathbf{v}_{12} \perp \mathbf{v}_{13} & x_1^2 y_2 y_3 & \text{row 1}
\end{array}
\]
For $i=2,3,4,5$, we could establish similar vertical relationships.\\
\textbf{Row 2 orthogonality}
\[
\begin{array}{c|c|c}
\text{Pair} & \text{Missing cross term} & \text{Justification} \\ \hline
\mathbf{v}_{21} \perp \mathbf{v}_{23} & x_2^2 y_1 y_3 & \text{row 2} \\
\mathbf{v}_{21} \perp \mathbf{v}_{24} & x_2^2 y_1 y_4 & \text{row 2} \\
\mathbf{v}_{23} \perp \mathbf{v}_{24} & x_2^2 y_3 y_4 & \text{row 2}
\end{array}
\]
\textbf{Rows 3--5 orthogonality}
\[
\begin{array}{c|c|c}
\text{Pair} & \text{Missing cross term} & \text{Justification} \\ \hline
\mathbf{v}_{32} \perp \mathbf{v}_{34} & x_3^2 y_2 y_4 & \text{row 3} \\
\mathbf{v}_{42} \perp \mathbf{v}_{43} & x_4^2 y_2 y_3 & \text{row 4} \\
\mathbf{v}_{42} \perp \mathbf{v}_{44} & x_4^2 y_2 y_4 & \text{row 4} \\
\mathbf{v}_{43} \perp \mathbf{v}_{44} & x_4^2 y_3 y_4 & \text{row 4} \\
\mathbf{v}_{51} \perp \mathbf{v}_{52} & x_5^2 y_1 y_2 & \text{row 5} \\
\mathbf{v}_{51} \perp \mathbf{v}_{54} & x_5^2 y_1 y_4 & \text{row 5} \\
\mathbf{v}_{52} \perp \mathbf{v}_{54} & x_5^2 y_2 y_4 & \text{row 5}
\end{array}
\]

%For $i=2$: missing $x_2^2 y_1 y_3$, $x_2^2 y_1 y_4$, $x_2^2 y_3 y_4$ give
%\[
%\mathbf{v}_{21} \perp \mathbf{v}_{23},\quad \mathbf{v}_{21} \perp \mathbf{v}_{24},\quad \mathbf{v}_{23} \perp \mathbf{v}_{24}.
%\]
%
%For $i=3$: missing $x_3^2 y_2 y_4$ gives $\mathbf{v}_{32} \perp \mathbf{v}_{34}$.
%
%For $i=4$: missing $x_4^2 y_2 y_3$, $x_4^2 y_2 y_4$, $x_4^2 y_3 y_4$ give
%\[
%\mathbf{v}_{42} \perp \mathbf{v}_{43},\quad \mathbf{v}_{42} \perp \mathbf{v}_{44},\quad \mathbf{v}_{43} \perp \mathbf{v}_{44}.
%\]

%For $i=5$: missing $x_5^2 y_1 y_2$, $x_5^2 y_1 y_4$, $x_5^2 y_2 y_4$ give
%\[
%\mathbf{v}_{51} \perp \mathbf{v}_{52},\quad \mathbf{v}_{51} \perp \mathbf{v}_{54},\quad \mathbf{v}_{52} \perp \mathbf{v}_{54}.
%\]

\noindent\textit{Between rows.}
We list all missing cross terms $x_i x_k y_j y_l$ ($i < k$) that yield orthogonality relations among the vectors in $$\mathcal{S}=\{\mathbf{v}_{11},\mathbf{v}_{12},\mathbf{v}_{13},\mathbf{v}_{21},\mathbf{v}_{23},\mathbf{v}_{24}, \mathbf{v}_{32},\mathbf{v}_{34},\mathbf{v}_{42},\mathbf{v}_{43},\mathbf{v}_{44},\mathbf{v}_{51}\}.$$ For each such term, the equation $\mathbf{v}_{ij} \cdot \mathbf{v}_{kl} + \mathbf{v}_{il} \cdot \mathbf{v}_{kj} = 0$ (or $\mathbf{v}_{ij} \cdot \mathbf{v}_{kj} = 0$ when $j=l$) simplifies using zero vectors to give $\mathbf{v}_{ij} \perp \mathbf{v}_{kl}$ or $\mathbf{v}_{il} \perp \mathbf{v}_{kj}$.

The following tables enumerate all pairwise orthogonality relations among the distinct vectors. Each entry indicates that the two vectors are orthogonal, with the missing cross term that forces it. \\
\textbf{Cross orthogonality: $\mathbf{v}_{11}$ with others}
\[
\begin{array}{c|c|c}
\text{Pair} & \text{Missing cross term} & \text{Justification} \\ \hline
\mathbf{v}_{11} \perp \mathbf{v}_{21} & x_1 x_2 y_1^2 & \text{same }y_1 \\
\mathbf{v}_{11} \perp \mathbf{v}_{23} & x_1 x_5 y_1 y_4 & \mathbf{v}_{54}=\mathbf{v}_{23},\ \mathbf{v}_{14}=0 \\
\mathbf{v}_{11} \perp \mathbf{v}_{24} & x_1 x_2 y_1 y_4 & \mathbf{v}_{14}=0 \\
\mathbf{v}_{11} \perp \mathbf{v}_{32} & x_1 x_3 y_1 y_2 & \mathbf{v}_{31}=0 \\
\mathbf{v}_{11} \perp \mathbf{v}_{34} & x_1 x_3 y_1 y_4 & \mathbf{v}_{14}=0,\ \mathbf{v}_{31}=0 \\
\mathbf{v}_{11} \perp \mathbf{v}_{42} & x_1 x_4 y_1 y_2 & \mathbf{v}_{41}=0 \\
\mathbf{v}_{11} \perp \mathbf{v}_{43} & x_1 x_4 y_1 y_3 & \mathbf{v}_{41}=0 \\
\mathbf{v}_{11} \perp \mathbf{v}_{44} & x_1 x_4 y_1 y_4 & \mathbf{v}_{14}=0,\ \mathbf{v}_{41}=0 \\
\mathbf{v}_{11} \perp \mathbf{v}_{51} & x_1 x_5 y_1^2 & \text{same }y_1
\end{array}
\]
\textbf{Cross orthogonality: $\mathbf{v}_{12}$ with others}
\[
\begin{array}{c|c|c}
\text{Pair} & \text{Missing cross term} & \text{Justification} \\ \hline
\mathbf{v}_{12} \perp \mathbf{v}_{21} & x_1 x_2 y_1 y_2 & \mathbf{v}_{22}=0 \\
\mathbf{v}_{12} \perp \mathbf{v}_{23} & x_1 x_2 y_2 y_3 & \mathbf{v}_{22}=0 \\
\mathbf{v}_{12} \perp \mathbf{v}_{24} & x_1 x_2 y_2 y_4 & \mathbf{v}_{14}=0,\,\mathbf{v}_{22}=0 \\
\mathbf{v}_{12} \perp \mathbf{v}_{32} & x_1 x_3 y_2^2 & \text{same }y_2 \\
\mathbf{v}_{12} \perp \mathbf{v}_{34} & x_1 x_3 y_2 y_4 & \mathbf{v}_{14}=0 \\
\mathbf{v}_{12} \perp \mathbf{v}_{42} & x_1 x_4 y_2^2 & \text{same }y_2 \\
\mathbf{v}_{12} \perp \mathbf{v}_{43} & x_1 x_4 y_2 y_3 & \mathbf{v}_{53}=0,\,\mathbf{v}_{13} \perp \mathbf{v}_{52},\, \mathbf{v}_{52}=\mathbf{v}_{42} \\
\mathbf{v}_{12} \perp \mathbf{v}_{44} & x_1 x_4 y_2 y_4 & \mathbf{v}_{14}=0 \\
\mathbf{v}_{12} \perp \mathbf{v}_{51} & x_1 x_5 y_1 y_2 & \mathbf{v}_{11} \perp \mathbf{v}_{42},\ \mathbf{v}_{52}=\mathbf{v}_{42}
\end{array}
\]
\textbf{Cross orthogonality: $\mathbf{v}_{13}$ with others}
\[
\begin{array}{c|c|c}
\text{Pair} & \text{Missing cross term} & \text{Justification} \\ \hline
\mathbf{v}_{13} \perp \mathbf{v}_{21} & x_1 x_2 y_1 y_3 & \mathbf{v}_{11} \perp \mathbf{v}_{23} \\
\mathbf{v}_{13} \perp \mathbf{v}_{23} & x_1 x_5 y_3 y_4 & \mathbf{v}_{54}=\mathbf{v}_{23},\ \mathbf{v}_{14}=0,\ \mathbf{v}_{53}=0 \\
\mathbf{v}_{13} \perp \mathbf{v}_{24} & x_1 x_2 y_3 y_4 & \mathbf{v}_{14}=0 \\
\mathbf{v}_{13} \perp \mathbf{v}_{32} & x_1 x_3 y_2 y_3 & \mathbf{v}_{33}=0 \\
\mathbf{v}_{13} \perp \mathbf{v}_{34} & x_1 x_3 y_3 y_4 & \mathbf{v}_{14}=0,\ \mathbf{v}_{33}=0 \\
\mathbf{v}_{13} \perp \mathbf{v}_{42} & x_1 x_5 y_2 y_3 & \mathbf{v}_{53}=0,\ \mathbf{v}_{52}=\mathbf{v}_{42} \\
\mathbf{v}_{13} \perp \mathbf{v}_{43} & x_1 x_4 y_3^2   & \text{same }y_3 \\
\mathbf{v}_{13} \perp \mathbf{v}_{44} & x_1 x_4 y_3 y_4 & \mathbf{v}_{14}=0 \\
\mathbf{v}_{13} \perp \mathbf{v}_{51} & x_1 x_5 y_1 y_3 & \mathbf{v}_{53}=0
\end{array}
\]
\textbf{Cross orthogonality: $\mathbf{v}_{21}$ with others}
\[
\begin{array}{c|c|c}
\text{Pair} & \text{Missing cross term} & \text{Justification} \\ \hline
\mathbf{v}_{21} \perp \mathbf{v}_{32} & x_2 x_3 y_1 y_2 & \mathbf{v}_{22}=0,\ \mathbf{v}_{31}=0 \\
\mathbf{v}_{21} \perp \mathbf{v}_{34} & x_2 x_3 y_1 y_4 & \mathbf{v}_{31}=0 \\
\mathbf{v}_{21} \perp \mathbf{v}_{42} & x_2 x_4 y_1 y_2 & \mathbf{v}_{22}=0,\ \mathbf{v}_{41}=0 \\
\mathbf{v}_{21} \perp \mathbf{v}_{43} & x_2 x_4 y_1 y_3 & \mathbf{v}_{41}=0 \\
\mathbf{v}_{21} \perp \mathbf{v}_{44} & x_2 x_4 y_1 y_4 & \mathbf{v}_{41}=0 \\
\mathbf{v}_{21} \perp \mathbf{v}_{51} & x_2 x_5 y_1^2   & \text{same }y_1
\end{array}
\]
\textbf{Cross orthogonality: $\mathbf{v}_{23}$ with others}
\[
\begin{array}{c|c|c}
\text{Pair} & \text{Missing cross term} & \text{Justification} \\ \hline
\mathbf{v}_{23} \perp \mathbf{v}_{32} & x_2 x_3 y_2 y_3 & \mathbf{v}_{22}=0,\,\mathbf{v}_{33}=0 \\
\mathbf{v}_{23} \perp \mathbf{v}_{34} & x_2 x_3 y_3 y_4 & \mathbf{v}_{33}=0 \\
\mathbf{v}_{23} \perp \mathbf{v}_{42} & x_2 x_4 y_2 y_3 & \mathbf{v}_{22}=0 \\
\mathbf{v}_{23} \perp \mathbf{v}_{43} & x_2 x_4 y_3^2   & \text{same }y_3 \\
\mathbf{v}_{23} \perp \mathbf{v}_{44} & x_5 x_4 y_4^2   & \mathbf{v}_{23}=\mathbf{v}_{54},\, \text{same }y_4 \\
\mathbf{v}_{23} \perp \mathbf{v}_{51} & x_2 x_5 y_1 y_3 & \mathbf{v}_{53}=0
\end{array}
\]
\textbf{Cross orthogonality: $\mathbf{v}_{24}$ with others}
\[
\begin{array}{c|c|c}
\text{Pair} & \text{Missing cross term} & \text{Justification} \\ \hline
\mathbf{v}_{24} \perp \mathbf{v}_{32} & x_2 x_3 y_2 y_4 & \mathbf{v}_{22}=0 \\
\mathbf{v}_{24} \perp \mathbf{v}_{34} & x_2 x_3 y_4^2   & \text{same }y_4 \\
\mathbf{v}_{24} \perp \mathbf{v}_{42} & x_2 x_4 y_2 y_4 & \mathbf{v}_{22}=0 \\
\mathbf{v}_{24} \perp \mathbf{v}_{43} & x_2 x_4 y_3 y_4 & \mathbf{v}_{23} \perp \mathbf{v}_{44} \\
\mathbf{v}_{24} \perp \mathbf{v}_{44} & x_2 x_4 y_4^2   & \text{same }y_4\\
\mathbf{v}_{24} \perp \mathbf{v}_{51} & x_2 x_5 y_1 y_4 & \mathbf{v}_{54}=\mathbf{v}_{23},\ \mathbf{v}_{21} \perp \mathbf{v}_{23}
\end{array}
\]
\textbf{Cross orthogonality: $\mathbf{v}_{32}$ with others}
\[
\begin{array}{c|c|c}
\text{Pair} & \text{Missing cross term} & \text{Justification} \\ \hline
\mathbf{v}_{32} \perp \mathbf{v}_{42} & x_3 x_4 y_2^2 & \text{same }y_2 \\
\mathbf{v}_{32} \perp \mathbf{v}_{43} & x_3 x_4 y_2 y_3 & \mathbf{v}_{33}=0 \\
\mathbf{v}_{32} \perp \mathbf{v}_{44} & x_3 x_4 y_2 y_4 & \mathbf{v}_{32}\perp\mathbf{v}_{23},\, \mathbf{v}_{23}=\mathbf{v}_{54},\,\mathbf{v}_{34}\perp\mathbf{v}_{52},\,\mathbf{v}_{52}=\mathbf{v}_{42}\\
\mathbf{v}_{32} \perp \mathbf{v}_{51} & x_3 x_5 y_1 y_2 & \mathbf{v}_{31}=0
\end{array}
\]
\textbf{Cross orthogonality: $\mathbf{v}_{34}$ with others}
\[
\begin{array}{c|c|c}
\text{Pair} & \text{Missing cross term} & \text{Justification} \\ \hline
\mathbf{v}_{34} \perp \mathbf{v}_{42} & x_3 x_4 y_2 y_4 & \mathbf{v}_{32} \perp \mathbf{v}_{44} \\
\mathbf{v}_{34} \perp \mathbf{v}_{43} & x_3 x_4 y_3 y_4 & \mathbf{v}_{33}=0 \\
\mathbf{v}_{34} \perp \mathbf{v}_{44} & x_3 x_4 y_4^2   & \text{same }y_4 \\
\mathbf{v}_{34} \perp \mathbf{v}_{51} & x_3 x_5 y_1 y_4 & \mathbf{v}_{31}=0
\end{array}
\]
\textbf{Cross orthogonality: $\mathbf{v}_{42}$, $\mathbf{v}_{43}$, $\mathbf{v}_{44}$ with $\mathbf{v}_{51}$}
\[
\begin{array}{c|c|c}
\text{Pair} & \text{Missing cross term} & \text{Justification} \\ \hline
\mathbf{v}_{42} \perp \mathbf{v}_{51} & x_4 x_5 y_1 y_2 & \mathbf{v}_{41}=0 \\
\mathbf{v}_{43} \perp \mathbf{v}_{51} & x_4 x_5 y_1 y_3 & \mathbf{v}_{41}=0 \\
\mathbf{v}_{44} \perp \mathbf{v}_{51} & x_4 x_5 y_1 y_4 & \mathbf{v}_{41}=0
\end{array}
\]

A systematic verification confirms that every pair of distinct vectors in the set $\mathcal{S}$
%\[
%\mathcal{S} = \{\mathbf{v}_{11}, \mathbf{v}_{12}, \mathbf{v}_{13}, \mathbf{v}_{21}, \mathbf{v}_{23}, \mathbf{v}_{24}, %\mathbf{v}_{32}, \mathbf{v}_{34}, \mathbf{v}_{42}, \mathbf{v}_{43}, \mathbf{v}_{44}, \mathbf{v}_{51}\}
%\]
is orthogonal. Thus $\mathcal{S}$ consists of $12$ nonzero pairwise orthogonal vectors in $\mathbb{R}^r$, which forces $r \ge 12$.

Since we have an explicit decomposition into $12$ squares, the minimal rank satisfies $12 \le \operatorname{sos}(P) \le 12$, hence $\operatorname{sos}(P) = 12$.
\end{proof}

\begin{remark}
The form $P$ in Definition~\ref{def:5x4-form} is neither simple (it contains cross terms) nor $y$-deficient (the variable $y_2$ appears in the cross term $2x_4 x_5 y_2^2$). The proof demonstrates that the orthogonality method applies robustly even when cross terms with identical $y$-indices are present, as long as the sparsity pattern yields enough missing cross terms to force orthogonality among the distinct vectors.
\end{remark}

\begin{remark}
The explicit SOS decomposition given in the proof uses $12$ squares: ten pure squares $(x_i y_j)^2$ together with $(x_2 y_3 + x_5 y_4)^2$ and $(x_4 y_2 + x_5 y_2)^2$. This matches the lower bound, confirming that the decomposition is optimal.
\end{remark}

\bigskip

\noindent\textbf{Acknowledgement}
% This work was partially supported by Research Center for Intelligent Operations Research, The Hong Kong Polytechnic University (4-ZZT8), the National Natural Science Foundation of China (Nos. 12471282, 12471351 and 12131004), and Jiangsu Provincial Scientific Research Center of Applied Mathematics (Grant No. BK20233002).

This work was partially supported by the National Natural Science Foundation of China (Nos. 12471351).

\medskip

\noindent\textbf{Data availability}
No datasets were generated or analysed during the current study.

\medskip

\noindent\textbf{Conflict of interest} The authors declare no conflict of interest.

\end{document}